\documentclass[10pt,oneside]{amsart} 

\usepackage{amsmath,amstext,amscd, amssymb, color, array} 
\usepackage{pinlabel}
\usepackage[normalem]{ulem}
\usepackage[colorlinks=true, pdfstartview=FitV, linkcolor=blue, citecolor=blue, urlcolor=blue]{hyperref}
 \usepackage[abs]{overpic}
\usepackage{xcolor,varwidth}
\usepackage{enumerate, scalerel}

\makeatletter
\def\@tocline#1#2#3#4#5#6#7{\relax
\ifnum #1>\c@tocdepth 
  \else 
    \par \addpenalty\@secpenalty\addvspace{#2}%
\begingroup \hyphenpenalty\@M
    \@ifempty{#4}{%
      \@tempdima\csname r@tocindent\number#1\endcsname\relax
 }{%
   \@tempdima#4\relax
 }%
 \parindent\z@ \leftskip#3\relax \advance\leftskip\@tempdima\relax
 \rightskip\@pnumwidth plus4em \parfillskip-\@pnumwidth
 #5\leavevmode\hskip-\@tempdima #6\nobreak\relax
 \ifnum#1<0\hfill\else\dotfill\fi\hbox to\@pnumwidth{\@tocpagenum{#7}}\par
 \nobreak
 \endgroup
  \fi}
\makeatother
\setcounter{tocdepth}{3}
\let\oldtocsection=\tocsection
\let\oldtocsubsection=\tocsubsection
\let\oldtocsubsubsection=\tocsubsubsection
\renewcommand{\tocsection}[2]{\hspace{0em}\oldtocsection{#1}{#2}}
\renewcommand{\tocsubsection}[2]{\hspace{1em}\oldtocsubsection{#1}{#2}}
\renewcommand{\tocsubsubsection}[2]{\hspace{2em}\oldtocsubsubsection{#1}{#2}}

\definecolor{cerulean}{rgb}{0,.48,.65} 
\definecolor{magenta}{rgb}{.5,0,.5} 
\definecolor{dred}{rgb}{.5,0,0} 
\definecolor{green}{rgb}{0,.5,0} 
\definecolor{blue}{rgb}{0,0,1} 
\definecolor{black}{rgb}{0,0,0} 
\definecolor{dgreen}{rgb}{0,.3,0} 
\definecolor{vdred}{rgb}{.3,0,0} 
\definecolor{red}{rgb}{1,0,0} 
\definecolor{salmon}{rgb}{0.98,0.50,0.45} 
\definecolor{gray}{rgb}{.5,.5,.5} 
\definecolor{seagreen}{rgb}{0.13,0.70,0.67} 
\definecolor{chartreuse}{rgb}{0.40,0.80,0.00}
\definecolor{cornflower}{rgb}{0.39,0.58,0.93} 

\definecolor{gold}{rgb}{0.65,0.45,0.00}
 
\theoremstyle{plain}

\newtheorem{thm}{Theorem}

\newtheorem*{conjproblem*}{The conjugacy problem}
\newtheorem*{0twistedproblem*}{The 0-twisted-conjugacy problem}
\newtheorem*{Htwistedproblem*}{The H-twisted conjugacy problem}
\newtheorem*{Itwistedproblem*}{The I-twisted conjugacy problem}

\theoremstyle{definition}

\newtheorem{Open questions}[thm]{Open questions}
\newtheorem{Open question}[thm]{Open question}
\newtheorem{Open problems}[thm]{Open problems}
\newtheorem{Open problem}[thm]{Open problem}


\def\Bbb{\mathbb}

\def\Z{\Bbb{Z}}
\def\N{\Bbb{N}}

\def\Q{\Bbb{Q}}

\def\ni{\noindent}

\def\CL{\hbox{\rm CL}}

\def\F+L{\hbox{$\textup{F}\!_+\textup{L}$}}

\def\SL{\hbox{\rm SL}}

\def\onto{{\kern3pt\to\kern-8pt\to\kern3pt}}

\def\<{\langle}
\def\>{\rangle}
\def\|{{\ |\ }}

\def\G{\Gamma}

\newcommand{\set}[1]{\left\{#1\right\}}

\renewcommand{\ni}{\noindent}

\def\*{^{\star}}

\def\l{\lambda}

\setlength{\parindent}{0pt}
\setlength{\parskip}{7pt}

\begin{document}

\title[Conjugators in 2-step nilpotent groups]{Linear Diophantine equations and conjugator length in 2-step nilpotent groups}

\author{M.\ R.\ Bridson and T.\ R.\ Riley}

\date{31 May 2025; revised 7 January 2026}

\begin{abstract}
\ni  We establish upper bounds on the lengths of minimal conjugators in 2-step nilpotent groups.  These
bounds exploit the existence of small integral solutions to systems of linear Diophantine equations.  
We prove that in some cases these bounds are sharp. This enables us to construct a
family of finitely generated 2-step nilpotent groups $(G_m)_{m\in\mathbb{N}}$ such that 
the conjugator length function of $G_m$ grows like a polynomial of degree $m+1$.

  \smallskip
\ni \footnotesize{\textbf{2020 Mathematics Subject Classification:  20F65, 20F10, 20F18}}  \\ 
\ni \footnotesize{\emph{Key words and phrases:} nilpotent groups,  conjugator length, linear Diophantine equations}
\end{abstract}

\thanks{The first author thanks the Mathematics Department of Stanford University for its hospitality.
The second author gratefully acknowledges the financial support of the National Science Foundation (NSF GCR-2428489).  ORCID: 0000-0002-0080-9059 (MRB),  0009-0004-3699-0322 (TRR)}

\maketitle

\section{Introduction}
In this article we will explore the difficulty of the conjugacy problem in 2-step nilpotent groups, using normal forms
to convert each instance of the problem into a system of linear Diophantine equations.
A natural measure of the
difficulty of the conjugacy problem in a finitely generated
group $G$ is its \emph{conjugator length function}  
$$\CL(n) \ = \ \max  \set{ \CL(u,v) \mid \text{ words }  u  \text{ and } v \text{  with } u \! \sim v \text{ in } G  \text{ and } |u| + |v| \leq n  },$$
where $u \sim v$ denotes conjugacy in $G$ and $ \CL(u,v)$ is defined to be the length $|w|$ of a shortest word $w$  such that $uw=wv$ in $G$.   The precise values of the function $\CL(n)$ depend on the choice of finite generating set,  but the
$\simeq$ class of $\CL(n)$ does not, where  $\simeq$ is the following standard equivalence relation on functions
$f,g:\mathbb{N} \to \mathbb{N}$: by definition,  $f \preceq g$ if there exists $C>0$ such that   $f(n) \leq Cg(  Cn+C ) +C$  for all $n \in \N$; if $f \preceq g$ and $g \preceq f$ then $f\simeq g$.  If $\CL(n)\simeq n^d$, then one says that
$\CL(n)$ is {\em polynomial of degree $d$}.

Here and in a companion article \cite{BrRi2} we describe the first families of groups to exhibit the following behaviour.

\begin{thm} \label{polynomial examples thm}
For all integers $d \geq 1$ there   is a finitely presented group with conjugator length function $\CL(n) \simeq n^d$. 	
\end{thm}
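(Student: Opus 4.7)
The plan is to prove Theorem~\ref{polynomial examples thm} by constructing the family $(G_m)_{m\in\mathbb{N}}$ of 2-step nilpotent groups announced in the abstract, and showing that the conjugator length function of $G_m$ satisfies $\CL(n)\simeq n^{m+1}$. Since finitely generated nilpotent groups are polycyclic and hence finitely presented, the ``finitely presented'' requirement in the statement is automatic. The case $d=1$ can be handled either by taking the bottom index of the family, or, if the indexing is taken to start at $m=1$, by citing a standard example such as the integer Heisenberg group, whose conjugator length function is linear.

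For each $G_m$ the argument divides into matching upper and lower bounds. I would first fix a Mal\cprime cev basis for $G_m$ compatible with the lower central series, encoding each element as a tuple of integers whose word length is governed, via the Pansu scaling, by the sum of absolute values of the non-central coordinates plus the square root of the absolute values of the central coordinates. Because $G_m$ is 2-step nilpotent, an explicit Baker--Campbell--Hausdorff expansion turns the conjugacy equation $uw=wv$ into a system of linear Diophantine equations $A(u,v)\,x=b(u,v)$ in the coordinate vector $x$ of the unknown $w$, whose data depend linearly on the normal forms of $u$ and $v$. For the upper bound $\CL(n)\preceq n^{m+1}$ I would then invoke a quantitative small-solution estimate for consistent linear Diophantine systems: if $Ax=b$ has an integer solution, it admits one whose entries are polynomially bounded in the bit-size of $(A,b)$. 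Applied to a conjugate pair with $|u|+|v|\leq n$ and combined with the Pansu scaling, this yields a conjugator of length $O(n^{m+1})$, the exponent $m+1$ being dictated by the nesting of central variables enforced by the presentation of $G_m$.

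The matching lower bound demands an explicit sequence of conjugate pairs $(u_k,v_k)$ in $G_m$ with $|u_k|+|v_k|=O(k)$ for which every conjugator has length at least $c\,k^{m+1}$. I would engineer the iterated commutator relations of $G_m$ so that the Diophantine system $A(u_k,v_k)\,x=b(u_k,v_k)$ exhibits a rigid tower of $m$ layers, each pinning a further coordinate of any conjugator to a value of order $k,k^2,\dots,k^{m+1}$ in succession; the largest such coordinate is central and is amplified by the square-root Pansu scaling into a word-length contribution of order $k^{m+1}$. The main obstacle lies precisely here: one must both choose the presentation of $G_m$ so that the Diophantine system actually forces these escalating coordinates, and certify sharpness by analysing the null space of $A(u_k,v_k)$ together with the centraliser of $u_k$ in $G_m$ to rule out unexpectedly short conjugators. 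The upper bound is essentially automatic once the Diophantine translation is set up; it is the tailored construction of $G_m$ and the verification of the lower bound that constitute the technical heart of the argument, with the companion article \cite{BrRi2} presumably supplying complementary pieces of the construction.
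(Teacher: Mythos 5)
Your overall strategy --- build $2$-step nilpotent central extensions $G_m$, translate $uw=wv$ into a linear Diophantine system via normal forms, obtain the upper bound from a small-integral-solution theorem for consistent systems (the paper uses Borosh--Flahive--Rubin--Treybig), and obtain the lower bound by engineering a recursive system that forces conjugator coordinates of size $n^2,n^3,\dots,n^{m+1}$ --- is the paper's strategy. The upper-bound half of your plan is essentially sound, though the exponent $m+1$ arises in the paper from the torsion-free rank $m$ of the centre (each relevant $r\times r$ minor of the augmented matrix is a product of $m-1$ entries of size $O(n)$ with one entry of size $O(n^2)$ coming from the right-hand side), not from any ``nesting of central variables''; and no Pansu-type distortion estimate is needed anywhere, because a conjugator can always be taken of the form $a_1^{x_1}\cdots a_k^{x_k}$ with trivial central part, so its word length is just $\sum_i|x_i|$.

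The genuine gap is in your lower-bound mechanism. You propose that ``the largest such coordinate is central and is amplified by the square-root Pansu scaling into a word-length contribution of order $k^{m+1}$.'' This cannot work: if $w$ conjugates $u$ to $v$, then so does $wc$ for every central $c$, so the central coordinates of a conjugator are never pinned down --- they can always be set to zero --- and no lower bound on conjugator length can be extracted from them. (Moreover, even granting a forced central coordinate of size $k^{m+1}$, the square-root distortion you invoke would yield word length about $k^{(m+1)/2}$, not $k^{m+1}$.) In the paper the tower of equations $x_1=n^2$, $x_i=nx_{i-1}$ pins down the exponents of the \emph{non-central} generators $a_1,\dots,a_m$; the forced value $x_m=n^{m+1}$ is then detected by the retraction $G_m\onto\langle a_1,\dots,a_m\rangle\cong\Z^m$, which directly bounds below the number of occurrences of $a_m$ in any conjugating word, with no distortion considerations. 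A smaller slip: the integral Heisenberg group has quadratic, not linear, conjugator length (the paper itself records this), so it cannot supply the $d=1$ case; for that you need, e.g., a nonabelian free group.
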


The groups  $\G_d$ we use to prove this theorem
 in \cite{BrRi2} are standard lattices in the much-studied model filiform groups.  The proof that  $\CL_{\G_d}(n) \simeq n^d$ proceeds by induction on $d$; it relies on the fact that $\G_{d}$ is isomorphic to $\G_{d+1}$
modulo its  cyclic centre, and hence on the fact that $\G_d$ is nilpotent of class $d$.  The proof involves a careful
analysis of the geometry of cyclic subgroups and centralisers in $\G_d$.  

The groups $G_m$ that we
will construct here to prove Theorem \ref{polynomial examples thm} are of a quite different nature.
First of all, they are not drawn from a well known family of prototypes: they are bespoke, 
designed for the sole purpose
of ensuring  that their conjugator length functions are polynomial of arbitrary degree.  
The construction of $G_m$
is not overtly geometric and the tools that we use to study these groups involve no geometry.
The  crucial property
of $G_m$ is that the Diophantine equations that arise from comparing normal forms for certain elements in 
the group have a simple recursive structure that enables us to establish a lower bound on $\CL_{G_m}(n) $. 
In a subsequent article we shall explain how this property can be exploited so as to construct amalgamated
free products with more exotic conjugator length functions. 

A further noteworthy feature 
of the  groups  $G_m$ is that they are all {\em nilpotent of class $2$}.  Thus they reveal
a sharp difference in behaviour between
the word problem and the conjugacy problem: for a  finitely generated nilpotent group $G$ of class $c$, the Dehn function,
which measures the complexity of the word problem,   is polynomial of degree at most $c+1$ (see \cite{GHR, Gromov6}); 
but we now know that the  conjugator length function of $G$ can be polynomial of any degree,  even when $c=2$. 

Our groups $G_m$ are central extensions 
$$
1\to \Z^m\to G_m \to \Z^{m+2}\to 1
$$
of $\Z^{m+2} = \langle a_1, \ldots, a_m, b_1, b_2\rangle$ by $\Z^m  = \langle c_1, \ldots, c_m \rangle$.

\begin{thm} \label{central extensions CL lower bound examples}
For $m \geq 1$, the group $$G_m \ := \ \scaleleftright[1.75ex]{\bigg\langle}{    \  \parbox{17mm}{ $a_1, \ldots, a_m$ \\ $b_1, b_2$ \\ $c_1, \ldots, c_m$} \rule{0mm}{12mm}  \left|  \ \rule{0mm}{12mm} \parbox{73mm}{$a_i a_j = a_j a_i$ for all $i,j$   \\ \ $b_1 b_2 =b_2 b_1$ \\  $ b_1 a_i = a_i b_1 c_i$ \  for \ $i=1, \ldots, m$  \\ $b_2 a_i = a_i b_2 c_{i+1}^{-1}$  \  for \ $i=1, \ldots, m-1$  \\ $b_2 a_m  = a_m b_2$ \\ $c_ic_j = c_jc_i$,  \ $a_ic_j = c_j a_i$,   \ $b_ic_j = c_j b_i$ for all  $i,j$   }  \right. }{\bigg\rangle}$$
has $\CL(n) \simeq n^{m+1}$.   
\end{thm}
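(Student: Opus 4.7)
The plan is to prove $\CL_{G_m}(n)\simeq n^{m+1}$ in both directions by translating conjugacy in $G_m$ into a linear Diophantine system in the $G_m/Z$-coordinates of the conjugator. First I set up coordinates: every element of $G_m$ has a unique normal form $a_1^{x_1}\cdots a_m^{x_m}b_1^{y_1}b_2^{y_2}c_1^{\gamma_1}\cdots c_m^{\gamma_m}$, the quotient $G_m/Z$ is free abelian of rank $m+2$ on $\bar a_i,\bar b_j$, and the centre $Z=[G_m,G_m]\cong\Z^m$. Since $G_m$ has nilpotency class two, the commutator descends to a bilinear antisymmetric pairing $\omega:(G_m/Z)^2\to Z$ with $w^{-1}uw=u\cdot\omega(\bar u,\bar w)$, so $u\sim v$ via $w$ iff $\omega(\bar u,\bar w)=vu^{-1}$. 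Reading this off the defining relations yields an $m\times(m+2)$ system whose first row is $y_1\alpha_1-x_1\beta_1=z_1$ and whose $k$-th row (for $2\leq k\leq m$) is
\[
y_1\alpha_k-y_2\alpha_{k-1}-x_k\beta_1+x_{k-1}\beta_2=z_k,
\]
where $(\alpha_1,\ldots,\alpha_m,\beta_1,\beta_2)$ are the $G_m/Z$-coordinates of $\bar w$ and $z_k$ is the $c_k$-exponent of $vu^{-1}$.

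For the upper bound $\CL\preceq n^{m+1}$ I invoke the general small-solution estimate for 2-step nilpotent groups established earlier in the paper. The crucial input is the quadratic distortion of each $\langle c_k\rangle$ in $G_m$: the identities $c_1^{K^2}=[b_1^K,a_1^K]$ and $c_k^{K^2}=[a_{k-1}^K,b_2^K]$ for $k\geq2$ realise $c_k^{K^2}$ as $4K$-letter commutators, so a central element $z=vu^{-1}$ of word length at most $n$ has $\Z^m$-coordinates of size $O(n^2)$, while the coefficients $x_i,y_j$ of the system are $O(n)$. A Borosh--Treybig-type argument on a consistent $m\times(m+2)$ integer system with coefficients $\leq n$ and right-hand side $\leq n^2$ produces an integer solution with $\|(\alpha,\beta)\|_\infty=O(n^{m+1})$, the bound being controlled by the largest $m\times m$ minor of the augmented matrix (at most $n^{m-1}\cdot n^2=n^{m+1}$). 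Writing the corresponding $w$ in normal form with zero central part gives $|w|=O(n^{m+1})$.

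For the matching lower bound I construct an explicit hard family. Let $u_N:=b_1b_2^N$ and $v_N:=u_N\cdot c_1^{N^2}$; the shortcut $c_1^{N^2}=[b_1^N,a_1^N]$ gives $|v_N|\leq(N+1)+4N$, so $|u_N|+|v_N|=\Theta(N)$. Because every $x_i$ vanishes for $u_N$, the system collapses to the telescoping recurrence $\alpha_1=N^2,\ \alpha_k=N\alpha_{k-1}$ for $2\leq k\leq m$, with $\beta_1,\beta_2$ not appearing in any equation. Every solution therefore has $\bar w=(N^2,N^3,\ldots,N^{m+1},\beta_1,\beta_2)$ for arbitrary $\beta_1,\beta_2\in\Z$. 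Since the abelianisation $G_m\to G_m/Z=\Z^{m+2}$ is length non-increasing on words,
\[
|w|\geq|\bar w|_{\Z^{m+2}}\geq\sum_{k=1}^m N^{k+1}=\Theta(N^{m+1}),
\]
giving $\CL_{G_m}(n)\succeq n^{m+1}$.

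The main obstacle is calibrating the upper bound correctly: one must combine the quadratic distortion of $Z$ (which inflates the effective size of the right-hand side from $n$ to $n^2$) with the two free columns of the $m\times(m+2)$ system so that the resulting Siegel bound has exponent $m+1$ rather than the naive $m+2$. The lower bound, by contrast, is a direct telescoping verification once $\omega$ and the commutator shortcut for $c_1^{N^2}$ are in hand: the recurrence $\alpha_k=N\alpha_{k-1}$ forces $\alpha_m=N^{m+1}$ with no room for cancellation via $\beta_1,\beta_2$, because those variables do not appear in any equation.
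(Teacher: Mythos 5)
Your proposal is correct and follows essentially the same route as the paper: the upper bound is the paper's general Theorem~\ref{thm: class-2 upper bound} (Borosh--Flahive--Rubin--Treybig small solutions, with the same $n^{m-1}\cdot n^2$ minor estimate), and your lower-bound witnesses $u_N=b_1b_2^N$, $v_N=b_1b_2^Nc_1^{N^2}$ with the commutator shortcut for $c_1^{N^2}$ and the telescoping system $\alpha_1=N^2$, $\alpha_k=N\alpha_{k-1}$ are exactly the paper's construction (up to swapping $u$ and $v$ and a sign). The only quibble is the passing claim that $Z=\langle c_1,\dots,c_m\rangle$ is the centre (false for $m=1$, where $b_2$ is also central), but nothing in your argument uses this --- only that it is a central subgroup containing $[G_m,G_m]$.
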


The upper bound in Theorem~\ref{central extensions CL lower bound examples} is a special case of the
following general result.

\begin{thm} \label{thm: class-2 upper bound}
The conjugator length functions of finitely generated class-2 nilpotent groups can be bounded from above as follows.
Let  $G$ be a finitely generated group  that is a central extension
$$ 1\to Z \to G \to A\to 1$$
where $A$ is abelian and
$$Z \  \cong \  \Z^m \times T$$
where $T$ is a finite abelian group.  Then $G$ has $\CL(n) \preceq n^{m + 1}$.  
\end{thm}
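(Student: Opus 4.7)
The plan is to translate conjugacy in $G$ into a consistent linear Diophantine system and then apply a Siegel-type bound to its smallest integer solution. First I choose lifts $\tilde a_1,\dots,\tilde a_r \in G$ of a generating set of $A$ modulo torsion, together with generators $c_1,\dots,c_m$ and $d_1,\dots,d_s$ of the $\Z^m$ and $T$ summands of $Z$. Since $G$ is class-2 nilpotent and $Z$ is central, the commutator map descends to a biadditive pairing $\omega : A \times A \to Z$ with $[\tilde a^{\bar w}, \tilde a^{\bar v}] = \omega(\bar w, \bar v)$, where $\tilde a^{\bar x} := \tilde a_1^{x_1}\cdots\tilde a_r^{x_r}$. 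A direct calculation using $wvw^{-1} = [w,v]\,v$ shows that $u \sim v$ in $G$ iff $\bar u = \bar v$ in $A$ and some $\bar w \in A$ satisfies $\omega(\bar w, \bar v) = uv^{-1}$ in $Z$.

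Now suppose $|u|_G, |v|_G \leq n$. Normal-form considerations give $|\bar v|_\infty = O(n)$ in $A$, and putting the length-$O(n)$ word $uv^{-1}$ (which projects to $0$ in $A$) into normal form — a class-2 computation involving $O(n^2)$ central-letter swaps — bounds the $\Z^m$-coordinates of $uv^{-1}$ by $O(n^2)$, while the $T$-coordinates are trivially $O(1)$. Since $\omega(\cdot, \bar v)$ is linear in $\bar v$, its projection to $\Z^m$ is multiplication by an $m \times r$ integer matrix $M_{\bar v}$ whose entries are $O(n)$. Thus producing a short conjugator reduces to (i) solving the consistent Diophantine system $M_{\bar v}\,\bar w = (uv^{-1})_{\Z^m}$ over $\Z^r$, and (ii) correcting a residual discrepancy in $T$.

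The main step is (i). The relevant classical Siegel-type fact, proved by selecting a maximal nonsingular $m \times m$ submatrix of $M_{\bar v}$ via Hermite normal form and applying Cramer's rule, is that any consistent $m \times r$ integer system with matrix entries at most $K$ and right-hand side entries at most $B$ admits a solution of sup-norm $O(K^{m-1} B)$. Plugging in $K = O(n)$ and $B = O(n^2)$ yields $\bar w_0 \in \Z^r$ with $|\bar w_0|_\infty = O(n^{m+1})$. For (ii), the same Hermite reduction also supplies a basis of $\ker M_{\bar v} \subseteq \Z^r$ of sup-norm $O(n^{m-1})$; since $T$ is a fixed finite group independent of $n$, a set of representatives for the cosets of $\ker \omega(\cdot, \bar v)$ in $\ker M_{\bar v}$ can be drawn from bounded integer combinations of this basis and hence has sup-norm $O(n^{m-1})$. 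Conjugacy of $u,v$ guarantees the required $T$-discrepancy lies in this finite image, so some representative $\bar k$ of sup-norm $O(n^{m-1})$ corrects it, and $\bar w := \bar w_0 + \bar k$ still satisfies $|\bar w|_\infty = O(n^{m+1})$.

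Lifting $\bar w$ to the word $w := \tilde a^{\bar w}$ gives $|w|_G = O(n^{m+1})$ and $wvw^{-1} = \omega(\bar w,\bar v)\,v = (uv^{-1})\,v = u$, which proves $\CL_G(n) \preceq n^{m+1}$. The main obstacle is the Diophantine bound in (i), which is the real workhorse of the argument; handling torsion is a sub-leading technicality, and any torsion in $A$ itself is absorbed by the same coset trick since it contributes only $O(1)$ to $|\bar w|$.
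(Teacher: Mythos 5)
Your proposal follows essentially the same route as the paper: reduce conjugacy to a consistent linear Diophantine system whose matrix entries are $O(n)$ and whose right-hand side is $O(n^2)$, then invoke a small-integral-solution bound of the form $O(K^{m-1}B)$ --- your ``classical Siegel-type fact'' is precisely the Borosh--Flahive--Rubin--Treybig theorem that the paper cites, and your torsion handling (coset correction inside $\ker M_{\bar v}$) is just a different packaging of the paper's auxiliary variables $o_j x_{k+j}$ and unimodular change of basis. One caution: your parenthetical justification of that bound via a maximal nonsingular submatrix and Cramer's rule does not by itself produce an \emph{integral} solution (the existence of an integer solution to the full system does not give one supported on the chosen columns), so that step should be cited as the theorem it is rather than proved this way.
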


The context of this theorem among prior literature on conjugator length functions of nilpotent groups is as follows. Ji, Ogle, and Ramsey \cite{JOR}   argued that the conjugator length functions  of finitely generated class-$2$  nilpotent groups  grow at
 most polynomially. Macdonald,   Myasnikov,   Nikolaev,  and Vassileva \cite[Thm.\ 4.7]{MMNV} proved polynomial  upper bounds for the conjugator length functions of all finitely generated nilpotent groups.
Their argument  gives an upper bound of $2^m(6mc^2)^{m^2}$ on the  degree, where $c$ is the   class and $m$ is the number of elements in what they call a \emph{Mal'cev basis} for the group.  In the case of a class-2 nilpotent group $G$ with  
torsion-free center $Z$,  the union of a basis for $Z$ and a set of elements of $G$ that map to a basis for $G/Z$ is a Mal'cev basis.    
 
The tighter bounds that we get in the class-$2$ case compared to \cite{MMNV} will be obtained
by carefully reducing each search for conjugators (where they are known to exist) to 
a consistent system of linear Diophantine equations, and then applying a result  of Borosh, Flahive, Rubin, and Treybig \cite{BFRT-Diophantine}  that bounds the size of the smallest integral solution.

For more background and information about 
conjugator length functions,  we refer the reader to our survey article with Andrew Sale \cite{BrRiSa}.  In that article
we proved that the conjugator length function of the 3-dimensional integral Heisenberg group $\mathcal{H}_3(\Z)$ is quadratic.  
The techniques that we deploy here are inspired in part by the proof of that result. 
In fact, the group $G_1$ in Theorem~\ref{central extensions CL lower bound examples}   is $\mathcal{H}_3(\Z) \times \Z$ with the $\Z$-factor being generated by $b_2$.   
 
\subsection*{Acknowledgement}  We thank the referees for their helpful comments.

\section{Conjugacy in class-2 nilpotent groups}

Our aim here is to prove Theorem~\ref{thm: class-2 upper bound}.  
We  have a  finitely generated group  $G$  that is a central extension
$$1\to Z \to G \to A\to 1$$
with $A$ abelian and
$$Z \  \cong \  \Z^m \times C_{o_1}  \times \cdots \times C_{o_l},$$
where each $C_{o_i}$ is a cyclic group of finite order $o_i$.  Let $r =m+l$. 

We express $A$ as a direct sum of cyclic groups and choose preimages
$a_1, \ldots, a_k \in G$  for   generators of these cyclic groups.  
We also fix  $c_1, \ldots, c_{r}\in Z$ so that $c_1, \ldots, c_{m}$  generate the $\Z^m$ summand 
and   
$c_{m+j}$, for $j= 1, \ldots, l$, generates the summand $C_{o_j}$.  This gives us a set of generators 
$S= \set{a_1, \ldots, a_k, c_1, \ldots, c_r}$ for $G$.  
Indeed, $G$ admits a normal form in which each element is expressed uniquely in the form 
\begin{equation}  \label{normal form eq}
a_1^{x_1} \cdots a_k^{x_k} \ c_1^{z_1} \cdots c_{r}^{z_{r}},
\end{equation}
 with $x_i, z_i \in \Z$ for all $i$ with $0 \leq z_{m+j} < o_j$ for $j= 1, \ldots, l$
and $0 \leq x_i < o(a_i)$  if the image of $a_i$ in $A$ has finite order $o(a_i)$.

For all $1 \leq i,j \leq k$, all $1 \leq s \leq m$, and all $1 \leq s' \leq l$, there exist  
unique integers $\gamma_{ijs}$ and $\gamma_{ij (m+s')}$ such that $[a_i, a_j] =  c_1^{\gamma_{ij1}} \cdots c_r^{\gamma_{ijr}}$ in $G$ and $0 \leq \gamma_{ij (m+s')} < o_{s'}$.  Note that $\gamma_{iis}=0$ and  $\gamma_{ijs}=-\gamma_{jis}$.  Also 
 $\gamma_{ii(m+s')}=0$ and, modulo $o_{s'}$, $\gamma_{ij(m+s')}=-\gamma_{ji(m+s')}$. Let $$L \ = \  \max_{ij} \set{ |\gamma_{ij1}|  + \cdots +  |\gamma_{ijr}| \mid 1 \leq i < j \leq k }.$$  

Suppose $n \in \N$ and  consider words $u$ and $v$ on $S$ such that $|u| + |v| \leq n$ and $u \sim v$ in $G$.  We rewrite
 $u$ and $v$ to express them (as elements of $G$)  in normal form
\begin{equation}\label{uv}
u \ = \ a_1^{\alpha_1} \cdots a_k^{\alpha_k} \ c_1^{p_1} \cdots c_r^{p_r} \quad \text{ and } \quad v \ = \ a_1^{\alpha_1} \cdots a_k^{\alpha_k} \ c_1^{q_1} \cdots c_r^{q_r}.
\end{equation}
For all $i$, the exponents of  $a_i$ in the normal  forms of $u$ and of $v$ agree, because the images of $u$ and $v$ in $A$ are equal.  

We will need bounds on the exponents in \eqref{uv} in terms of $n$. Our commutator convention is $[a,b]=a^{-1}b^{-1}ab$.  We have $a_1a_j =  a_ja_1[a_1,a_j]$
and $a_1^{-1}a_j = a_ja_1^{-1}[a_1,a_j]^{-1}$ in $G$  because $[a_1,a_j]$ is central. To obtain the normal forms of $u$ and $v$,  we first push all their occurrences of $a_1^{\pm 1}$  to the left using these identities,  moving  commutators  to the end of the word as they are produced,
using the fact that they are central.    We then push all occurrences of $a_2^{\pm 1}$  to the
left similarly and place them after the power of $a_1$ that we have gathered;  then we push letters $a_3^{\pm 1}$ and so on.  
The total number of times we push one letter $a_i^{\pm 1}$ past another $a_j^{\pm 1}$  to get it in
 the correct position is less than $n^2$, and each push creates one commutator. 
We use the relations $[a_i, a_j] =  c_1^{\gamma_{ij1}} \cdots c_r^{\gamma_{ijr}}$ to rewrite the commutators
that are created
in terms of the letters $c_1, \ldots, c_r$, then, using the fact that they are central,  we shuffle the $c_1, \ldots, c_r$  into order.  It follows
from this description that  $|\alpha_1|+ \cdots + |\alpha_k| \leq n$, and for all $i =1, \ldots, m$, 
\begin{equation} 
|p_i|, |q_i|  \ \leq \  Ln^2,	  \label{pi and qi}
\end{equation}
and, for all $j  =1, \ldots, l$,    
\begin{equation} 
|p_{m+j}|, |q_{m+j}|  \ < \  o_j    \label{pi and qi torsion case}
\end{equation} 
per the normal form \eqref{normal form eq}.

If $w\in G$ conjugates $u$ to $v$, then so does $wc$ for every $c\in\<c_1,\dots,c_m\>$, because the $c_i$ are
central.  Thus $u\sim v$ implies that  there  exist $x_1, \ldots, x_k \in \Z$ so that for  $w =   a_1^{x_1} \cdots a_k^{x_k}$, 
\begin{equation} \label{conj eqn}
w^{-1}uw \ = \ v	
\end{equation}
  in $G$.    By  expressing $w^{-1}uw$ in normal form \eqref{normal form eq} and comparing 
  the powers of each $c_i$ to the powers in the normal form for $v$, we find that \eqref{conj eqn} is equivalent to a system of linear Diophantine equations   $$\begin{array}{rrrrrrlcl}
\l_{1,1} x_1 & + &  \cdots & + &  \l_{1,k} x_k & & &    = & q_1-p_1 \\  
 &   &  &   &   & & &    \vdots &   \\  
\l_{m,1} x_1 & + &  \cdots & + &  \l_{m,k} x_k & & &    = & q_m-p_m \\  
\l_{{m+1}, 1} x_1 & + &  \cdots & + &  \l_{m+1,k} x_k & + & o_1 x_{k+1} &    = & q_{m+1}-p_{m+1} \\  
 &   &  &   &   & & &     \vdots &   \\  
\l_{{r}, 1} x_1 & + &  \cdots & + &  \l_{r,k} x_k &   &   \qquad \ \ + \ \ \  o_l x_{k+l}    & = & q_{r}-p_{r}. 
\end{array}$$
 Here, for all $i$, the $i$-th equation  counts the $c_i$. The additional term $o_j x_{k+j}$ present in equations  $m+1$ through $r$ accounts for $c_{m+j}$ having order $o_j$ --- we need this because we are interested in integer solutions, not solutions  modulo $o_i$.  Recalling that $r = m+l$ and setting $d = k+l$, 
we write this system in the more concise form $M\mathbf{x} = \mathbf{b}$ by defining  the $r \times d$ matrix $M$ and the vectors $\mathbf{x} \in \Z^d$ and $\mathbf{b}  \in \Z^r$ by: 
$$M \ = \  \begin{pmatrix}
 \l_{1,1} & \cdots & \l_{1,k}    \\
 \vdots & & \vdots   \\
 \l_{m,1} & \cdots & \l_{m,k}   \\
 \l_{m+1,1} & \cdots & \l_{m+1,k} &  o_1   \\
 \vdots & & \vdots &  & \ddots & \\
 \l_{m+l,1} & \cdots & \l_{m+l,k}  & & & o_l   \\
\end{pmatrix},
 \quad  
\mathbf{x} \ = \  \begin{pmatrix} 
  x_1 \\ 
   \\   \vdots \\   \\ 
  x_d  \\  \\
\end{pmatrix},
 \quad  \mathbf{b}  \ = \  \begin{pmatrix} 
  q_1 - p_1 \\ 
   \\   \vdots \\   \\ 
  q_r - p_r  \\  \\
\end{pmatrix}.
$$

We claim that  
\begin{equation} \label{kLn}
|\l_{ij}|   \le kLn,
\end{equation}
for all $1\leq i \leq r$ and all $1 \leq j \leq k$.
In order to prove this claim, we consider how to transform $w^{-1}uw$ into normal form,  starting 
from the concatenation of the normal forms for $u$ and $w^{\pm 1}$,  
$$
w^{-1}uw =  (a_k^{-x_k} \cdots a_1^{-x_1}) \ (a_1^{\alpha_1} \cdots a_k^{\alpha_k} \ c_1^{p_1} \cdots c_r^{p_r}) \ 
(a_1^{x_1} \cdots a_k^{x_k}).
$$
First  assume $x_1>0$, so  the first letter of $w$ is $a_1$.  We want to move this letter past $u$
and cancel it with the terminal $a_1^{-1}$ in $w^{-1}$.  Pushing $a_1$ past the central generators $c_1, \ldots, c_r$ has no effect,  while pushing $a_1$ past the syllables $a_1^{\alpha_1}$, \ldots, $a_k^{\alpha_k}$ increases the exponent of $c_i$  (for $1 \leq i \leq r$)  by
$$
\sigma_{i1}= \sum_{t=1}^k \alpha_t\gamma_{1ti}.
$$
If $x_1<0$ then the first letter of $w$ is $a_1^{-1}$ and pushing it past $u$ to cancel with the terminal letter of $w^{-1}$
will decrease the exponent of $c_i$ by $\sigma_{i1}.$ 
Whatever the signs, moving $a_1^{x_1}$ past $u$ to cancel
with the terminal $a_1^{-x_1}$ in $w^{-1}$ will add $\sigma_{i1}x_1$ 
to the exponent of $c_i$.  If we continue in this manner until each syllable $a_1^{x_1}$, \ldots, $a_k^{x_k}$  of $w$ has been moved past $u$
and cancelled with the corresponding $a_1^{-x_1}$, \ldots, $a_k^{-x_k}$ (respectively) in $w^{-1}$, the total change in the exponent of $c_i$ will be 
\begin{equation}\label{mij}
\sum_{j=1}^k  \sigma_{ij}x_j \ \ \text{ where }\ \  
\sigma_{ij}= \sum_{t=1}^k\alpha_{t}\gamma_{jti}.
\end{equation} 
At the end of this process (after shuffling the $c_1, \ldots, c_r$ that have been generated into the correct positions) we have the
normal form for $v$, so the  first   sum in \eqref{mij} equals $q_i-p_i$  modulo the order of $c_i$. 
It follows that for all $1\leq i \leq r$ and all $1 \leq j \leq k$,
$$
\l_{ij} = \sigma_{ij} = \sum_{t=1}^k \alpha_t \gamma_{jti},
$$
which means that  $|\l_{ij}| \le k L \max_t |\alpha_t| \le kLn$, as claimed.

Next we will pursue a change  of variables which will have the effect of replacing the matrix $M$ by a matrix $M'$ which differs in that the entries in the lower-left block --- that is, the $\l_{m+j, t}$ for $j = 1, \ldots, l$ and $t = 1, \ldots, k$ --- are reduced to uniformly bounded values.   To this end, for all such $j$ and $t$, define $s_{m+j, t}$ and $r_{m+j, t}$ to be the integers such that  
\begin{equation}
\l_{m+j, t} \ = \ s_{m+j, t} o_j + r_{m+j, t} \ \text{ and }  \ 0 \leq r_{m+j, t} < o_j.  \label{division}	
\end{equation}

Let $P \in \SL_d(\Z)$ be the lower-triangular matrix 
$$P \ = \  \begin{pmatrix}
 1        \\
  & \ddots &   \\
   &   & 1  \\
 -s_{m+1,1} & \cdots & -s_{m+1,k} & \ 1 \ \\
 \vdots & & \vdots &  & \ \ddots \ & \\
 -s_{m+l,1} & \cdots & -s_{m+l,k} &  & &  \ 1 \  \\
\end{pmatrix}$$
and define
 $$M' \ := \  M P  \ = \  \begin{pmatrix}
 \l_{1,1} & \cdots & \l_{1,k}    \\
 \vdots & & \vdots   \\
 \l_{m,1} & \cdots & \l_{m,k}   \\
 r_{m+1,1} & \cdots & r_{m+1,k} & & o_1    \\
 \vdots & & \vdots & & & \ddots & \\
 r_{m+l,1} & \cdots & r_{m+l,k} & & & & o_l    \\
\end{pmatrix}.$$

We know that there exists $\mathbf{x}$ satisfying $M \mathbf{x}  \ = \ \mathbf{b}$ because $u \sim v$ in $G$. So there exists $\mathbf{x}' =  P^{-1} \mathbf{x} \in \Z^d$  satisfying $M'  \mathbf{x}'   \ = \ \mathbf{b}$.  We will apply a theorem of Borosh, Flahive, Rubin, and Treybig  to the system $M'  \mathbf{x}'   \ = \ \mathbf{b}$.

\begin{thm}[\cite{BFRT-Diophantine}]  \label{BFRT general lemma}
	For all $r >0$ and all $r \times d$ integer matrices $K$ of row-rank $r$ and
	all $\mathbf{b}\in \Z^r$,   if there is a solution $\mathbf{x}\in\Z^d$ to 
	 the system of $r$ linear Diophantine equations $K \mathbf{x}  \ = \ \mathbf{b}$,
then there is a solution $\mathbf{x}\in\Z^d$ whose entries all have absolute value 
at most the maximum of the absolute values of the $r \times r$ minors of the augmented matrix  $\left[ K : \mathbf{b}  \right]$. 
\end{thm}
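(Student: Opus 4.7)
My plan is to prove this by induction on $d$, with the base case $d = r$ handled by Cramer's rule.  When $d = r$, the matrix $K$ is square of full rank, so $\det(K)$ is a nonzero integer, and the unique rational solution $\mathbf{x} = K^{-1}\mathbf{b}$ has entries $x_i = \det(K^{(i)})/\det(K)$, where $K^{(i)}$ is obtained from $K$ by replacing its $i$-th column with $\mathbf{b}$.  Since $\mathbf{x}$ is assumed integral and $|\det(K)| \ge 1$, one gets $|x_i| \le |\det(K^{(i)})|$; each $K^{(i)}$ is an $r \times r$ submatrix of $[K : \mathbf{b}]$, so its determinant is one of the $r \times r$ minors appearing in the claimed bound.

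For the inductive step $d > r$, I would permute columns of $K$ so that the first $r$ columns form an invertible submatrix $K_0$, and partition $\mathbf{x} = (\mathbf{y},\mathbf{z}) \in \Z^r \times \Z^{d-r}$ accordingly.  The system becomes $\mathbf{y} = K_0^{-1}(\mathbf{b} - K_1 \mathbf{z})$, so the set of admissible $\mathbf{z}$ is a non-empty affine sublattice $\Lambda \subset \Z^{d-r}$.  Given a starting integer solution, the strategy is to translate it by short vectors in $\ker K \cap \Z^d$ to drive each coordinate down.  A natural way to produce such translations is via a Hermite normal form reduction $KU = [H \mid 0]$ with $U \in \GL_d(\Z)$: the kernel lattice is spanned by the last $d - r$ columns of $U$, and the entries of $U$ admit Cramer-style expressions in terms of $r \times r$ minors of $K$, which then control how much each translation can shift individual coordinates of a candidate solution.

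The main obstacle is that the bound must be expressed in terms of minors of the \emph{original} augmented matrix $[K : \mathbf{b}]$, not of some matrix obtained after normalisation.  One therefore has to verify that the maximum absolute value among $r \times r$ minors does not grow under the unimodular column operations being used, and that the recursion on $d - r$ feeds back into an estimate of the correct form.  The argument in \cite{BFRT-Diophantine} handles this with a careful direct induction tailored precisely to keep the minors of $[K : \mathbf{b}]$ in view; alternatively, a more modern proof could invoke Siegel's Lemma or bounds on the shortest vector of the affine solution lattice, but matching the exact minor-based bound requires nontrivial bookkeeping that is essentially what the cited paper carries out.
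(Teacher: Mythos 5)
The paper offers no proof of this statement: it is imported verbatim from Borosh--Flahive--Rubin--Treybig and used as a black box, so there is no internal argument to measure yours against. Judged on its own terms, your proposal establishes only the square case. The base case $d=r$ via Cramer's rule is correct and complete: $x_i=\det(K^{(i)})/\det(K)$, integrality of $\mathbf{x}$ and $|\det K|\ge 1$ give $|x_i|\le|\det K^{(i)}|$, and each $K^{(i)}$ is indeed an $r\times r$ submatrix of $[K:\mathbf{b}]$.

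The inductive step, however, is a plan rather than a proof, and you say so yourself when you defer the ``nontrivial bookkeeping'' to the cited paper --- but that bookkeeping is where the entire content of the theorem lives. Two concrete gaps: (i) ``translate by short kernel vectors to drive each coordinate down'' is not an argument, because subtracting a kernel generator to shrink one coordinate will generally inflate others; obtaining a \emph{simultaneous} bound on all $d$ coordinates by a single maximal $r\times r$ minor (rather than by a product or sum of minors, which is what naive lattice-reduction estimates produce) is precisely the delicate point. (ii) Your Hermite-normal-form step bounds the kernel generators by minors of $K$ after unimodular column operations, but the claimed bound is in terms of minors of the original $[K:\mathbf{b}]$; you flag that one must check the maximal minor does not grow under these operations, but you do not check it, and for the augmented matrix (whose column $\mathbf{b}$ is not being transformed along with $K$) this is not automatic. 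As it stands the proposal proves the theorem when $d=r$ and otherwise restates its difficulty. A minor further point: the quantifier ``there exists $C>0$'' in the statement is vestigial --- $C$ never appears in the conclusion --- and neither you nor the paper addresses this, though it is harmless.
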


Assume, as required for Theorem~\ref{BFRT general lemma}, that  $M'$ has full row-rank  $r$.  We bound the absolute values of the $r \times r$  minors of $\left[ M' : \mathbf{b}  \right]$ as follows. Suppose $N$ is an $r \times r$ matrix obtained  by deleting some columns of $\left[ M' : \mathbf{b}  \right]$.  Now consider expanding $\det N$ along the final column of $N$,  which is the only one that could be $\mathbf{b}$.  We get that    $\det N = \sum_{\sigma \in \textup{Sym}(r)} \textup{sign}(\sigma) \pi_{\sigma}$ where each  $\pi_{\sigma}$ is a product of $r$ entries of $N$, one from each of the $r$-rows---and among these $r$ entries, exactly one  comes from the final column of $N$.  The absolute values of the terms from rows $1 \leq i \leq m$ are at most  $kLn$ by \eqref{kLn}, with the possible single exception of one coming from $\mathbf{b}$, which is most $2Ln^2$ by \eqref{pi and qi};  the absolute values of the terms from rows $m +j$ for $1 \leq  j \leq l$ are less  than $o_j$ by \eqref{division}, with the possible exception of one coming from $\mathbf{b}$,  which is less than $2 o_j$ by \eqref{pi and qi torsion case}.  So, for a suitable constant $C>0$,  $$|\det N| \ \leq \ r! \, (kLn)^{m-1} \,  \left( \max \set{kLn, 2Ln^2} \right)  \, 2 o_1 \cdots o_l \ \leq \  C n^{m+1},$$  
 and Theorem~\ref{BFRT general lemma} tells us that the system $M'  \mathbf{x}'   \ = \ \mathbf{b}$ has a solution  $\mathbf{x}'$ whose entries $x'_i$  all satisfy  $|x'_i|  \leq C n^{m+1}$.

For $1 \leq i \leq k$, the entry $x_i$ of $\mathbf{x} = P \mathbf{x}'$ is $x'_i$.  
 Thus we obtain a word $w=a_1^{x_1}\cdots a_k^{x_k}$ of length at most a constant times $n^{m+1}$  such that $uw=wv$ in $G$.

Finally, suppose that the row-rank of $M'$ is   $\hat{r} < r$. Then some row of $M'$ is a $\Q$-linear combination of the other rows, and so, because $M'  \mathbf{x}'   \ = \ \mathbf{b}$  is consistent, the same row in     $\left[ M' : \mathbf{b}  \right]$ is the same $\Q$-linear combination of the other rows.  So removing this row does not alter the set of solutions. 
We discard rows in this manner until we have replaced $M'$ with a matrix of full row-rank $\hat{r}$.  Theorem~\ref{BFRT general lemma} 
then tells us that there is a solution $\mathbf{x}'$  with  $|x'_1|$, \ldots, $|x'_d|$ all at most the maximum of the absolute values of the $\hat{r} \times \hat{r}$ minors for the redacted matrix,  
which then leads to a stronger bound than the one  we derived above. 

This completes our proof of Theorem~\ref{thm: class-2 upper bound}. \qed

\section{Conjugator length in the groups $G_m$}

The group $G_m$ of Theorem~\ref{central extensions CL lower bound examples}
is a central extension of $\Z^{m+2} = \langle a_1, \ldots, a_m, b_1, b_2\rangle$ by $\Z^m  = \langle c_1, \ldots, c_m \rangle$.
Elements $w$ of $G_m$ can be expressed uniquely per the normal form 
\begin{equation} \label{normal form Gm}	
w \ = \  a_1^{x_1} \cdots a_m^{x_m} \  b_1^{y_1} b_2^{y_2} \ c_1^{z_1} \cdots c_m^{z_m}
\end{equation}
with $x_1, \ldots, x_m, y_1, y_2, c_1, \ldots, c_m \in \Z$.  

\begin{proof}[Proof of Theorem~\ref{central extensions CL lower bound examples}]
Theorem~\ref{thm: class-2 upper bound} tells that $\CL(n) \preceq n^{m+1}$, so 
what remains to be proved is that  $\CL(n) \succeq n^{m+1}$.  Suppose $n \in \N$.  Let $u = b_1 b_2^n \ a_1^{-n} b_1^{-n}      a_1^n b_1^n  $ and $v=b_1 b_2^n$, which we will see momentarily are conjugate elements of $G_m$.   Then 
\begin{equation} \label{sum of lengths}	
|u| + |v|  \ = \  2+6n
\end{equation}
 and $u = b_1 b_2^n  c_1^{-n^2}$ in $G_m$. 
For $w$ as in  \eqref{normal form Gm},   we use the defining relations for $G_m$ to calculate the normal forms of $uw$ and $wv$, pushing the  letters $a_i$ then $b_i$ to the left and remembering that the $c_i$ are central: 
\begin{align*}
uw  & \  = \   b_1 b_2^n  c_1^{-n^2} \  a_1^{x_1} \cdots a_m^{x_m} \  b_1^{y_1} b_2^{y_2} \ c_1^{z_1} \cdots c_m^{z_m}  \\  
& \ = \ a_1^{x_1} \cdots a_m^{x_m} \  b_1^{y_1 +1} b_2^{y_2+n} \ c_1^{x_1  + z_1 - n^2} c_2^{x_2 -nx_1  + z_2} \cdots   c_m^{x_m  - n x_{m-1} + z_m}, \\[12pt]
wv  & \  = \      a_1^{x_1} \cdots a_m^{x_m} \  b_1^{y_1} b_2^{y_2} \ c_1^{z_1} \cdots c_m^{z_m} \ b_1 b_2^n     \\  
& \ = \  a_1^{x_1} \cdots a_m^{x_m} \  b_1^{y_1 +1} b_2^{y_2 +n} \ c_1^{z_1} \cdots c_m^{z_m}.   
\end{align*}

Now,  $uw  =  wv$  in $G_m$ if and only if the exponents of $c_1$, \ldots, $c_m$ in their normal forms agree, and that amounts to the system of equations
$$\begin{array}{rrrrrcrrrll}
 &  x_1 & \!\!   \!\! &  &  &  & & &  & & = \ n^2  \\   	
 - & nx_1  & \!\! + \!\! & x_2 & & & & & & & = \ 0   \\ 	
& & & \rotatebox{20}{$\ddots$}   & & \rotatebox{20}{$\ddots$}  & & & & &  \,\, \vdots \\
&  &  &   & - &  nx_{m-2} & + &  x_{m-1} & & &  = \ 0   \\ 	
 & & & & & & - &  n x_{m-1}  & \!\! + \!\! & x_m & = \ 0   	
\end{array}$$
with no constraints on $y_1, y_2, z_1, \ldots, z_m$.  This system has the unique solution $x_i = n^{i+1}$    
for all $i$. 
So  
\begin{equation}
	w_0  \ = \ a_1^{n^2} a_2^{n^3} \cdots a_m^{n^{m+1}} 
\end{equation}
satisfies $uw_0=w_0v$ in $G_m$ and therefore $u$ and $v$ are conjugate in $G_m$.   Moreover, \emph{any} word $w$  such that $uw=wv$ in $G_m$ 
has the same image as $w_0$ under the retraction $G_m\onto \<a_1,\dots,a_m\>\cong\Z^m$ (which
has kernel $\<b_1,b_2,c_1,\dots,c_m\>$), so $w$ must contain   at least $n^{m+1}$ occurrences
of the letter $a_m$.   
Therefore $\CL(u,v) \geq n^{m+1}$ and, in light of \eqref{sum of lengths},  $\CL(n) \succeq n^{m+1}$.
\end{proof}

\bibliographystyle{alpha}
\bibliography{bibli}

\ni {Martin R.\ Bridson},  Mathematical Institute, Andrew Wiles Building, Oxford OX2 6GG, United Kingdom. {bridson@maths.ox.ac.uk}, \
\href{http://www2.maths.ox.ac.uk/~bridson/}{https://people.maths.ox.ac.uk/bridson/}

\ni  {Timothy R.\ Riley}, \rule{0mm}{6mm} 
Department of Mathematics, 310 Malott Hall,  Cornell University, Ithaca, NY 14853, USA. {tim.riley@math.cornell.edu}, \
\href{http://www.math.cornell.edu/~riley/}{http://www.math.cornell.edu/$\sim$riley/}

 \end{document}